\newtheorem{theorem}{Theorem}[section]
\newtheorem{lemma}[theorem]{Lemma}
\newtheorem{prop}[theorem]{Proposition}
\newtheorem{deF}[theorem]{Definition}
\newtheorem{rulE}[theorem]{Rule}
\newcommand\VS[1]{\mbox{\rule{0pt}{#1}}}
\renewcommand{\implies}{\Rightarrow}
\newcommand{\nat}{\mathbb{N}}
\newcommand{\ratn}{\mathbb{Q}}
\newcommand{\ncom}{\newcommand}
\def\abs#1{\left\vert #1 \right\vert}
\def\set#1{\lbrace #1 \rbrace}
\newcommand{\al}{\alpha}
\newcommand{\ep}{\epsilon}
 \ncom{\Del}{\Delta}
\newcommand{\GGG}[1]{
\begin{gather*}
#1
\end{gather*}
}
\newcommand{\AAA}[1]{
\begin{align*}
#1
\end{align*}
}
\newcommand{\ord}{\operatorname{ord}}
\newcommand{\zec}{Zeckendorf}
\newcommand{\fib}{Fibonacci}
\newcommand{\funds}{base  sequence}
\newcommand{\HSW}[1]{\rule{#1\linewidth}{0pt}}
\newcommand{\cE}{\mathcal{F}}
\newcommand{\cEE}{\mathcal{E}}
\newcommand{\lex}{lexicographical}
\newcommand{\cs}{coefficient sequence}
 \newcommand{\cF}{\mathcal{F}}
 \newcommand{\cFo}{\cEE_\circ}
\newcommand{\seq}[1]{\set{#1_k}_{k=1}^\infty}
\newcommand{\wt}{\widetilde}
\newcommand{\phitilde}{\widetilde{\phi}}
\renewcommand{\implies}{\ \Rightarrow\ }
\newcommand{\Cdot}[2]{ \sum_{k=1}^\infty #1_k #2_k }
\newcommand{\eval}{\mathrm{eval}}
\newcommand{\lv}[1]{^{(#1)}}
\newcommand{\lub}{\mathrm{lub}}
\newcommand{\phii}{\varphi}
\newcommand{\tphii}{\widetilde\varphi}
\newcommand{\roe}{rule of expansion}
\newcommand{\CGR}{Chung-Graham \roe}
\begin{document}

 \begin{center}
\bf\Large
The Chung-Graham Expansion\\ 
\rm\normalsize
\VS{1.5em}
Sungkon Chang   
\end{center} 
 
  \subsection*{Abstract}
 
Chung and Graham introduced a method to uniquely represent each positive integer using even-indexed Fibonacci terms. We generalize this result to represent each positive integer using other  Fibonacci terms with equally-spaced indices.

\section{Introduction}  \label{sec:intro}

Let $\seq Q$ be a sequence of real numbers in the interval $[0,1]$.
 In \cite{graham-1984}, 
Chung and Graham considered a measure  of irregularity in the sequence
suggested by a question of D. J. Newman (see \cite{erdos-graham}):
$$  C=\inf_n \ \liminf_{m\to \infty}\  n\, \abs{ Q_{m+n}-Q_m },$$
which is a more refined version of the measure introduced by Bruijn and Erd\"os  in \cite{erdos}. 
Chung and Graham proved that   
$$C\le \al:=\left( 1 + \sum_{k=1}^\infty \frac 1 {F_{2k}} \right)^{-1} \approx 
0.39441967$$
where $F_k$ are the terms of the \fib\ sequence, i.e., 
$(F_1,F_2)=(1,1)$ and $F_{k+2} = F_{k+1} + F_k$ for $k\ge 1$.
What a surprising appearance of the \fib\ sequence in the seemingly unrelated problem!
Moreover, for each positive integer $n$, 
they construct a sequence $\seq {\ep\lv n}$ in $\set{0,1,2}$, and prove that 
if $Q_n= \al \sum_{k=1}^\infty \ep\lv n_k / F_{2k}$, then 
the value of $C$ for $\set{Q_n}_{n=1}^\infty$ is equal to $\al$, which 
implies that $\al$ is the least upper bound of $C$.

In \cite{graham-1984},
Chung and Graham show that 
 for each positive integer $n $,
  there is 
a unique sequence $ \seq {\ep\lv n}  $ 
 such that 
 \begin{equation} 
 n=\sum_{k=1}^\infty  \ep\lv n_k F_{2k} \label{eq:CG-expansion}
\end{equation} 
and 
the terms $\ep\lv n_k$ satisfy Rule \ref{rule:CG} given below.
 They use these sequences $\seq {\ep\lv n}$ to reach the least upper bound of $C$.
 \begin{rulE} \ %
 \rm \label{rule:CG}
\begin{enumerate}
\item
$\ep\lv n_k \in\set{0,1,2}$ for all $k\in\nat$.
\item
If $\ep\lv n_k=\ep\lv n_j=2$ where $1\le k<j$, then 
 $\ep\lv k_i=0$ for some $k<i<j$.
 In particular, $2=\ep\lv n_k=\ep\lv n_{k+1}$ is not allowed.
\end{enumerate} 
\end{rulE}  
\noindent
 We call the summation (\ref{eq:CG-expansion}) {\it the Chung-Graham expansion of $n$},
 and the conditions described in Rule \ref{rule:CG}
 {\it the Chung-Graham rule of expansion}.

 This reminds us of \zec's Theorem,
 which   states that 
every positive integer can be uniquely written as a sum of 
 non-adjacent distinct \fib\ terms.
 In other words, if $n $ is a positive integer, then
 there is a unique sequence $\seq \ep$ 
 such that $n=\sum_{k=1}^\infty \ep_k F_{k}$ and
\begin{equation}
\ep_1=0,\ \ep_k\in\set{0,1},\ 
 \ep_k=1 \Rightarrow \ep_{k+1}=0\ \text{for all }k\in\nat. \label{eq:Z-rule}
\end{equation} 
 The values of  $F_1$ and $F_2$ are equal to each other, and by requiring $\ep_1=0$, 
  we achieve the uniqueness of the sequence $\seq \ep$.
These examples lead us to the broader question of expanding   positive integers by a sequence,
a topic with a rich body of literature; see 
\cite{berthe,brown-61,bruckman,CFHMN2,miller:minimality,daykin,miller:f-dec,fraenkel,frigeri,grabner,hogatt,Lek}.

Let us introduce terminology to  discuss the topic more precisely.
  Let $\nat$ and $\nat_0$ denote the set of positive integers and 
 the set of non-negative integers, respectively.
 We denote a sequence $\seq G$ simply by $G$, and  
 identify the sequence with the infinite tuple consisting of its terms, i.e., $G=(G_1,G_2,G_3,\dots)$.
\begin{deF}
\rm
Let  $ \ep $ be a sequence in $\nat_0$, and 
 let  $ G$ be a sequence in $\nat$.
  If $n=\Cdot \ep G\in\nat_0$, then 
 we call the summation  an {\it expansion of $n$ by  $ G$}  and 
 the sequence $  \ep$ a {\it\cs}.
 By definition, \cs s have only finitely many non-zero entries.
 If there is a set of conditions that $ \ep$ is required to satisfy, we call the set a {\it \roe}.
\end{deF}

 For the Chung-Graham expansions, the \roe\  is as stated 
 in Rule \ref{rule:CG}.
 For the \zec\ expansions, the \roe\ is the conditions listed in (\ref{eq:Z-rule}). 
 
The natural question for us is how to generalize the Chung-Graham expansions using other terms with equally spaced indices.
 For example, if we  use $F_{4k-2}$ for $k\ge 1$, i.e., 
 $$F_2, F_6, F_{10},F_{14},\dots,$$
  what should be a \roe\  under which each positive integer
  can be uniquely expanded by these terms?  
We provide an answer to this question in this note.  In fact, we introduce  a rule of expansion
for the sequence $\set{F_{2+d(k-1)}}_{k=1}^\infty $ where $d\in\nat$ is a fixed even integer.  
 Our approach can be applied when $d$ is odd or when the sequence is $\set{F_{1+d(k-1)}}_{k=1}^\infty $
 for all integers $d\ge 2$.
 The work  is nearly identical to
 the case where $d>0$ is even, which is treated here, with the terms $F_{2+d(k-1)}$, and 
 we leave the details to the reader for simplicity in the presentation of this note.

 \section{The \roe} \label{sec:roe}
Let $d$ be a positive even integer.  We introduce the \roe\ for the sequence $\set{F_{2+d(k-1)}}_{k=1}^\infty $.
Let $K$ be the sequence given by $(K_1,K_2)=(1,3)$ and 
 $K_{k+2}= K_{k+1} + K_k$ for all $k\in\nat$.
 Then, $$K=(1,3,4,7,11,18,29,\dots).$$
 
\begin{deF}
\rm \label{def:rule}
Let $A:=K_d-1$ and $B:=F_{2+d}-1$.  
A \cs\ $\ep$, which has only finitely many non-zero entries by definition, is said to satisfy {\it the Chung-Graham \roe\  for even interval $d$} if 
the following are satisfied:
\begin{enumerate}
\item $0\le \ep_1\le B$ and $0\le \ep_k\le A$ for all $k\ge 2$.
 
\item  
If there is an index $m\ge 2$ such that $\ep_m=A$ and $\ep_k=A-1$ for all $2\le k\le m-1$, then $\ep_1 <B$.

\item If $\ep_k=\ep_j=A$ for $2\le k<j$, then there is an index $c$ such that $k<c<j$ and 
$\ep_c \le A-2$. In particular,  $ \ep_k=\ep_{k+1}=A$ for $k\ge 2$ is not allowed.

\end{enumerate} 
\end{deF}

Let $d=2$. Then, $A=B=2$, and it is equivalent to the \CGR\ introduced in Section 
\ref{sec:intro}. Item (1) states $\ep_k\in\set{0,1,2}$ for all $k\in\nat$.
Item (3) is  the \CGR\  applied to indices higher than $1$, and Item  (2) is related to 
the  \CGR\  that involves the index $1$, e.g.,
$(1,1,1,1,2)$ is allowed while
$(2,1,1,1,2 )$ is not.

Let $d=4$. Then, $A=6$ and $B=7$.  
Listed below are examples of \cs s that satisfy the \CGR\ for interval $4$.
If  relevant to  Item (2),
the first entries are written in boldface:
\begin{equation} 
( \mathbf{ 6,5,5,5,6},0,2,5,6),\ (7,4,5,5,6,5,5),\ (\mathbf{6,6},3,5,5,0,4). 
\label{eq:cs-example}
\end{equation}

The Chung-Graham expansions are generalized as follows.
 
 \begin{theorem}
\label{thm:main} 
Let $d$ be a positive even integer.
Then, for each $n\in\nat$,
there is a unique \cs\ $\ep$ satisfying the Chung-Graham \roe\ 
for even interval $d$ 
 such that $n=\sum_{k=1}^\infty \ep_k F_{2+d(k-1)}$.
 
\end{theorem}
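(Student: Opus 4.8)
The plan is to fix a truncation level $N$ and prove that the evaluation map $\ep\mapsto\sum_{k=1}^N\ep_k F_{2+d(k-1)}$ is a bijection from the set $\Sigma_N$ of coefficient sequences supported on $\set{1,\dots,N}$ and obeying the \roe\ of Definition~\ref{def:rule} onto $\set{0,1,\dots,G_{N+1}-1}$, where $G_k:=F_{2+d(k-1)}$. The theorem then follows by choosing $N$ with $G_{N+1}>n$, since representations have finite support. The arithmetic backbone is the recurrence $G_{k+1}=(A+1)G_k-G_{k-1}$ for $k\ge 2$, together with $A+1=K_d$ and $B+1=G_2$; the recurrence comes from the Lucas identity $F_{n+d}=K_d F_n-F_{n-d}$, valid because $d$ is even and because $K$ is the Lucas sequence. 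I would record these identities first.

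The crux is an upper bound lemma: every $\ep\in\Sigma_N$ satisfies $\sum_{k=1}^N\ep_k G_k\le G_{N+1}-1$, with the maximum attained (for $N\ge 2$) by $\ep=(B-1,A-1,\dots,A-1,A)$. I expect to prove this by induction on $N$, peeling off the top digit $\ep_N$. When $\ep_N\le A-1$ the bound follows from the inductive hypothesis together with $G_N\ge G_{N-1}$; the hard case is $\ep_N=A$, where item (3) forbids the prefix from ending in a run of $(A-1)$'s capped by an $A$, and item (2) couples $\ep_1$ to the lower run. Handling this cleanly will require a simultaneous induction on a few refined quantities that record the top-end configuration of the prefix, in particular the maximal value of a valid prefix that may legally be followed by an $A$ at the next index. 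This interaction of items (2) and (3) is the main obstacle.

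Injectivity, hence uniqueness, then drops out of the bound: if two distinct valid sequences had equal sums, I would take the largest index $k$ at which they differ, note that the restrictions to $\set{1,\dots,k-1}$ are again valid (each rule is inherited by such a prefix), so their sums lie in $[0,G_k-1]$ by the lemma, and observe that a difference of at least $G_k$ at position $k$ cannot be cancelled from below. For surjectivity I would count: combining injectivity with the lemma shows the image is a set of $|\Sigma_N|$ distinct integers inside a range of size $G_{N+1}$, so it suffices to prove $|\Sigma_N|=G_{N+1}$. I would establish this with a transfer-matrix argument on boundedly many states that track the trailing pattern relevant to item (3) and the boundary behaviour at index $1$ relevant to item (2); the resulting auxiliary counts all satisfy the same second-order recurrence $w_{k+1}=(A+1)w_k-w_{k-1}$ as $G$, so the identity $|\Sigma_N|=G_{N+1}$ reduces to checking the base cases $N=1,2$, where $|\Sigma_1|=B+1=G_2$ and $|\Sigma_2|=AB+A+B=G_3$.

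Assembling the pieces, for each $N$ the map is injective with image inside a set of exactly $|\Sigma_N|=G_{N+1}$ values, hence a bijection onto $\set{0,1,\dots,G_{N+1}-1}$. Since every $n\in\nat$ lies in $\set{1,\dots,G_{N+1}-1}$ once $N$ is large, and since a valid sequence for $n$ has its support inside any sufficiently large $\set{1,\dots,N}$, the truncated bijections are mutually consistent and yield the stated existence and uniqueness over all of $\nat$.
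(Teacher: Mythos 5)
Your proposal is correct in outline, and it splits into two halves of unequal novelty relative to the paper. The uniqueness half is essentially the paper's own: your upper-bound lemma (every valid sequence supported on $\set{1,\dots,N}$ evaluates to at most $G_{N+1}-1$, with the maximum attained by $(B-1,A-1,\dots,A-1,A)$) is Lemma \ref{lem:n<H}, proved there by exactly the induction you sketch --- peel off the top digit, and in the hard case $\ep_N=A$ use item (3) to locate an entry $\le A-2$ just below the run of $(A-1)$'s, or item (2) together with $A\le B$ when that run reaches index $2$; a single induction suffices and no auxiliary ``refined quantities'' are needed. The largest-differing-index argument for injectivity is then verbatim the paper's. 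The existence half is where you genuinely diverge: the paper proves existence constructively by a greedy algorithm (with its own delicate case analysis when the leading coefficient equals $A$), whereas you propose the pigeonhole route $|\Sigma_N|=G_{N+1}$. This does work: writing $S_N=|\Sigma_N|$ and $T_N$ for the number of valid length-$N$ prefixes that may legally be followed by $A$, one has $S_{N+1}=A\,S_N+T_N$; the prefixes that may \emph{not} be followed by $A$ are exactly those ending in a block $(A,A-1,\dots,A-1)$ with the $A$ at some index $j\ge 2$ (there are $T_{j-1}$ of these for each $j$), together with the single sequence $(B,A-1,\dots,A-1)$, and a telescoping induction then gives $T_N=S_N-S_{N-1}$, hence $S_{N+1}=(A+1)S_N-S_{N-1}$, matching the recurrence (\ref{eq:the-recurrence}) with your base cases $S_1=B+1=G_2$ and $S_2=AB+A+B=G_3$. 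What the greedy proof buys is an algorithm producing the expansion; what your count buys is a cleaner logical structure, with all the combinatorial difficulty isolated in the local extendability analysis, plus the identity $|\Sigma_N|=F_{2+dN}$ as a by-product. (The paper's second proof, via the successor function on the lexicographically ordered set of valid sequences, is yet a third route, different from yours as well.)
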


A majority of research works in the literature focus on finding a \roe\ given a sequence, and
 a relatively small number of works concern the converse of this task.
Namely, given a \roe, 
   what are the sequences by which each $n\in\nat$ is uniquely expanded   under the rule?
If such a sequence exists, we call it a {\it a fundamental (or base) sequence} for the \roe;
in this note, we shall call it a base sequence.
 This direction of research may begin by imposing some reasonable requirements 
 on the \roe.
 Introduced  in \cite{author} by the author is a general approach to the rule of expansion, and 
 if  a   rule of expansion satisfies the general principle,
  by \cite[Theorem 16]{author},  there is a unique increasing \funds\ for the \roe.
It follows from this theorem that for the \CGR\ for even interval $d$, 
  there is a unique increasing 
\funds, namely, $\set{F_{2+d(k-1)}}_{k=1}^\infty$.
For the \zec\ rule of expansion described in (\ref{eq:Z-rule}), Daykin proved in  \cite{daykin} that 
there is a unique \funds\ among all sequences in $\nat$, namely, 
the \fib\ sequence.
Arguably, this  is the most intriguing definition of the \fib\ sequence. 

We present two different proofs of Theorem \ref{thm:main}.
In Section \ref{sec:proof}, we prove the existence and uniqueness described in 
Theorem \ref{thm:main} using 
the greedy algorithm, which is the typical approach. 
 In Section \ref{sec:cs} and \ref{sec:expansions}, we prove the theorem using the \lex\ order 
 defined on the set of the \cs s.
 In the last two sections, the general approach introduced in \cite{author} are revisited in the context of the current paper.  The readers can use the approach introduced in these sections to find a \roe\ for the case of $d$ being odd or 
 the case of other linear recurrences.
 
Given a \cs\ $\ep$,
we denote by $\ord(\ep)$ the largest index $\ell$ such that $\ep_\ell\ne 0$ if $\ep$ is not the zero sequence.
 If $\ep$ is the zero sequence, then we define $\ord(\ep):=1$.
 If $\ell=\ord(\ep)$, then
 we also identify $\ep$ with the finite tuple
 $(\ep_1,\ep_2,\dots, \ep_\ell)$.

\section{The recurrence and the maximal \cs s} \label{sec:CG-expansions}

The \CGR\ for even interval $d$ is derived from a linear recurrence that $\set{F_{2+d(k-1)}}_{k=1}^\infty$ satisfies.
We discuss in this section the recurrence and its relationship with the  \roe.

Let 
$\phii=\tfrac12(1+\sqrt 5)$ be the golden ratio, and $\tphii$
be its Galois conjugate $\frac12(1-\sqrt 5)$.
By Binet's Formula,
\begin{equation}\label{eq:Binet}
F_k=\tfrac1{\sqrt 5}(\phii^k -\tphii^k),\quad
K_k =\phii^k + \tphii^k 
\end{equation} 
where $K $ is the sequence defined in Section \ref{sec:roe}.
Let $H$ be the sequence given by $H_k =F_{2 + d(k-1)}$.
Then, the formula in (\ref{eq:Binet}) implies 
$$H_k = a (\phii^d)^k + b (\tphii^d)^k
\quad
\text{where }
a,b \in \ratn[\phii].
$$
\renewcommand{\phitilde}{\widetilde{\phii}}
Notice that $\phii^d = \tfrac12( K_d + F_d \sqrt 5)$ for all $d\in\nat$, which
follows   trivially 
from (\ref{eq:Binet}).
Then, the minimal polynomial for $\phii^d$ is $x^2 - K_d x +\tfrac14(K_d^2 - 5 F_d^2)$.
\begin{lemma}
For $d\in\nat$, we have $K_d^2 - 5 F_d^2=(-1)^d \cdot 4$.
\end{lemma}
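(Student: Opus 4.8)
The plan is to prove the identity by a direct computation from Binet's Formula (\ref{eq:Binet}) rather than by induction on $d$. First I would square each of the two expressions. From $K_d = \phii^d + \tphii^d$ I obtain $K_d^2 = \phii^{2d} + 2(\phii\tphii)^d + \tphii^{2d}$, and from $F_d = \tfrac1{\sqrt 5}(\phii^d - \tphii^d)$ the factor of $5$ in $5F_d^2$ conveniently clears the $\tfrac1{\sqrt5}$, giving $5F_d^2 = \phii^{2d} - 2(\phii\tphii)^d + \tphii^{2d}$. Subtracting, the pure powers $\phii^{2d}$ and $\tphii^{2d}$ cancel while the two cross terms reinforce, leaving the clean expression $K_d^2 - 5F_d^2 = 4(\phii\tphii)^d$.

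The one remaining ingredient is the value of the product $\phii\tphii$, which I would evaluate as $\phii\tphii = \tfrac14(1+\sqrt5)(1-\sqrt5) = \tfrac14(1-5) = -1$; equivalently, this is the constant term of the minimal polynomial $x^2 - x - 1$ of $\phii$ over $\ratn$. Hence $(\phii\tphii)^d = (-1)^d$ and the claimed identity $K_d^2 - 5F_d^2 = (-1)^d\cdot 4$ follows immediately. I would note that this also reconciles with the preceding display: the constant term $\tfrac14(K_d^2 - 5F_d^2)$ of the minimal polynomial of $\phii^d$ is precisely the product of its roots $\phii^d$ and $\tphii^d$, namely $(\phii\tphii)^d = (-1)^d$, which is exactly the content of the lemma.

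There is essentially no genuine obstacle here: once Binet's Formula is in hand this is a two-line symmetric-function computation, and the only point requiring care is keeping track of the signs on the cross terms and the factor of $5$ so that the $\phii^{2d}$- and $\tphii^{2d}$-terms cancel cleanly. An alternative route would be induction on $d$ using the simultaneous recurrences for $K$ and $F$ together with the identities relating $K_{d+1}F_d$ and $K_d F_{d+1}$, but this is strictly more laborious and obscures the transparent source of the sign $(-1)^d$, which is just the relation $\phii\tphii = -1$.
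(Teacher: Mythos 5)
Your proof is correct and is essentially the paper's own argument: both rest on $\phii\tphii=-1$ together with the identification $K_d^2-5F_d^2=4\,\phii^d\tphii^d$, the paper obtaining it by factoring $\phii^d\tphii^d=\tfrac14(K_d-F_d\sqrt5)(K_d+F_d\sqrt5)$ while you expand the squares of the Binet expressions, which is the same computation read in the other direction. Your closing remark identifying the quantity as the constant term (product of roots) of the minimal polynomial of $\phii^d$ is exactly the observation the paper makes in the display immediately preceding the lemma.
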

\begin{proof}
Notice that $ \phii \phitilde = -1$, and hence, $ \phii^d \phitilde^d =( -1)^d$.
The assertion  follows from 
\GGG{
\phii^d \phitilde^d =\tfrac12(K_d - F_d \sqrt 5)
\cdot \tfrac12 (K_d + F_d \sqrt 5) = \tfrac14( K_d^2 - 5 F_d^2 ).
}
\end{proof}

Thus, the two numbers $\phii^d$ and $\phitilde^d$ satisfy
the characteristic equation $x^2 = K_d x -(-1)^d$.
If $d$ is even, then the sequence $H$ satisfies the recurrence
\begin{equation}
\label{eq:the-recurrence}
H_{k+2} = K_d H_{k+1}-H_k.
\end{equation}

For the sequences satisfying the linear recurrence with positive constant coefficients,
the standard \roe\  and its \funds\ associated with this recurrence  
 are well-known; see \cite{hamlin,mw}.
For the linear recurrence with some negative  constant coefficients,
a \roe\  and its \funds\  are introduced by the author in \cite{author}. 
However, the \funds\ introduced in \cite{author} is not equal to $H$ for each $d\ge 4$.
For these cases, we tweak the rule, so that  under a new one,  $H$ is a \funds.
The \roe\ introduced in Definition \ref{def:rule}  is a variation of the rule introduced in
\cite{author}.

We conclude this section 
  explaining a key component of the rule of expansion introduced in \cite{author}.
It is called 
{\it the maximal \cs\ of order $n$} denoted by $\beta\lv n$, and 
it
satisfies  the property:
\begin{equation}
\label{eq:beta-n} 
 1+ \Cdot{\beta\lv n} H = H_{n+1}. 
\end{equation}
This  can be viewed as a generalization of
 consecutive $9$s in the last digits of a decimal expansion, e.g.,
 $1 +12345\mathbf{9999}=1234\mathbf{6}0000 $ 
 where the block of $9$s is carried over to the next digit.

 For the sequence $H$, we may use the recurrence (\ref{eq:the-recurrence}) to find $\beta\lv n$.
 Notice that if $m\ge 3$, then 
\begin{equation}
\label{eq:unfold-diff} 
H_m - H_{m-1} = (K_d-2) H_{m-1} +( H_{m-1}- H_{m-2}). 
\end{equation}
Then, for $n\ge 3$, we have
\begin{align} 
H_{n+1}&=K_d H_n - H_{n-1}  \notag
	=(K_d -1)H_n + (H_n - H_{n-1}) \\
	&=(K_d -1)H_n + (K_d-2) H_{n-1} +(H_{n-1}-H_{n-2}) \quad \text{by (\ref{eq:unfold-diff})} \notag \\
	&
	=  (K_d -1)H_n + (K_d-2) H_{n-1} +\cdots  + (K_d-2) H_{2} +(H_2-1)  \notag \\
	&
	=  (K_d -1)H_n + (K_d-2) H_{n-1} +\cdots  + (K_d-2) H_{2} +(H_2-2)+1 . \label{eq:unfolded}
\end{align}
Thus, for property (\ref{eq:beta-n}), we may define the entries of $\beta\lv n$ to be the
 coefficients of the expansion  (\ref{eq:unfolded}).
 \begin{deF}

\rm \label{def:beta-n}
Define  the \cs\   $\beta\lv n$ which is of order $n$ as follows:
\GGG{
\beta\lv 1=(B),\quad \beta\lv 2 = 
(B-1,A),\\
\intertext{and for every $n\ge 3$,}
\beta\lv n:= 
(B-1,A-1,A-1,\dots,A-1,A-1,A),
}
which contains  $n$ terms.
 
\end{deF}

The standard \roe\ introduced in \cite{author} chooses  the value of $H_2$ to be equal to $K_d$, i.e., $H_2-2=K_d-2$,
so that the entries of $\beta\lv n$ described in (\ref{eq:unfolded}) have preperiodic structure and $H$ becomes its \funds.
However, for our sequence $H$,  we have $H_2= F_{2+d} \ne K_d$ if $d\ge 4$, and we may keep $H_2-2$ as the first entry 
of $\beta\lv n$, which will break the preperiodic structure at the first entry.

 \begin{lemma}\label{lem:carry-over} 
The identity (\ref{eq:beta-n}) is satisfied for all $n\ge 1$.
\end{lemma}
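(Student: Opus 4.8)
The plan is to proceed by induction on $n$, using the recurrence (\ref{eq:the-recurrence}) together with the explicit description of $\beta\lv n$ in Definition \ref{def:beta-n}. Throughout I record the three facts I will lean on: $H_1 = F_2 = 1$, $A = K_d - 1$, and $B = H_2 - 1$. The base case $n = 1$ is a direct substitution: since $\beta\lv 1 = (B)$, we get $1 + B\cdot H_1 = 1 + (H_2 - 1) = H_2$, which is (\ref{eq:beta-n}) for $n=1$.

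For the inductive step the key observation is that $\beta\lv{n+1}$ differs from $\beta\lv n$ only by a single local change: the two sequences agree in every earlier entry, the entry in position $n$ is decremented by one (from $A$ to $A-1$, and from $B$ to $B-1$ in the transition $n=1\to 2$), and a new entry equal to $A$ is appended in position $n+1$. Hence the increment of the expansion is
\begin{equation*}
\Cdot{\beta\lv{n+1}}{H} - \Cdot{\beta\lv{n}}{H} = -H_n + A\,H_{n+1} = (K_d - 1)H_{n+1} - H_n,
\end{equation*}
and by the recurrence (\ref{eq:the-recurrence}) the right-hand side equals exactly $H_{n+2} - H_{n+1}$. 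Combining this with the induction hypothesis $\Cdot{\beta\lv{n}}{H} = H_{n+1} - 1$ gives $1 + \Cdot{\beta\lv{n+1}}{H} = H_{n+1} + (H_{n+2}-H_{n+1}) = H_{n+2}$, which closes the induction.

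The only point requiring care is the uniform treatment of the position-$n$ decrement across the transitions $n=1\to 2$, $n=2\to 3$, and $n\ge 3$, since the first entry of $\beta\lv n$ involves $B$ rather than $A$. In every case, however, the decrement is exactly $1$, and because $H_1 = 1$ the increment computation above is identical in form; I therefore do not expect a genuine obstacle. The statement is essentially a bookkeeping identity forced by the recurrence, and as a sanity check one notes that for $n \ge 3$ it can also be read off directly from the unfolded expansion (\ref{eq:unfolded}) by matching the coefficient of each $H_k$ with the corresponding entry of $\beta\lv n$.
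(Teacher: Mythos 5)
Your proof is correct and rests on the same mechanism as the paper's: the recurrence $H_{n+2}=K_dH_{n+1}-H_n$ applied to the single-entry change between consecutive $\beta\lv n$'s. The paper handles $n=1,2$ directly and cites the unfolded telescoping identity (\ref{eq:unfolded}) for $n\ge 3$; your uniform induction is just that telescoping run one step at a time, so the two arguments are essentially identical.
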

\begin{proof}
Recall that $H$ satisfies the recurrence in (\ref{eq:the-recurrence}).
If $n=1$, then $1+\Cdot{\beta\lv 1} H =1+B H_1 = 1+F_{2+d}-1 = H_2$.
If $n=2$, then 
\begin{align*} 
1+
\Cdot{\beta\lv 2} H &=1+ (B-1)H_1 + A H_2 = 1 + (H_2-2) + AH_2\\
&=-1 + (A+1)H_2=H_3. 
\end{align*}
For $n\ge 3$, the identity (\ref{eq:unfolded}) proves the assertion.
\end{proof}

\section{Proof of Theorem \ref{thm:main}} \label{sec:proof}

Let $\cF$ be the collection  of coefficient sequences  that satisfy the \CGR\ for even interval $d$.

\begin{lemma} \label{lem:cut-off}
Let $\ep=(\ep_1,\cdots ,\ep_\ell)\in\cF$.
Then, the \cs\ $\ep':=(\ep_1,\dots, \ep_m)$ and 
$\wt \ep=(\ep_1,\cdots ,\ep_\ell,a)$ are members of $\cF$ 
for each $1\le m<\ell$ and $0\le a\le A-1$.

\end{lemma}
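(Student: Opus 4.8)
The plan is to read off membership in $\cF$ directly from the three conditions of Definition \ref{def:rule} and check that each one survives both operations; since $\cF$ is defined by exactly those three conditions, verifying them is enough. The first condition is immediate in both cases: truncation only discards entries, while the appended entry $a$ occupies index $\ell+1\ge 2$ and satisfies $0\le a\le A-1\le A$, so the bound $0\le\ep_k\le A$ for $k\ge 2$ is retained and the bound on $\ep_1$ is untouched.

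For the truncation $\ep'=(\ep_1,\dots,\ep_m)$, I would note that every condition of the rule, restricted to indices $\le m$, is a sub-instance of the corresponding condition for $\ep$. Any pattern witnessing Definition \ref{def:rule}(2) or (3) inside $\ep'$ uses only entries $\ep_k$ with $k\le m$, hence is also a pattern inside $\ep$; the conclusion the rule attaches to that pattern for $\ep$ --- namely $\ep_1<B$ in case (2), or the existence of an intermediate index $c$ with $k<c<j$ and $\ep_c\le A-2$ in case (3) --- refers only to indices already present in $\ep'$, so it transfers verbatim. This yields $\ep'\in\cF$.

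For the extension $\wt\ep=(\ep_1,\dots,\ep_\ell,a)$, the decisive observation is that the new last entry is strictly below $A$, since $a\le A-1$; this is precisely what defuses conditions (2) and (3), both of which are triggered only by an entry equal to $A$. Concretely, for (2) any triggering index $m$ must have $\wt\ep_m=A$, so $m\ne\ell+1$, forcing $m\le\ell$; the whole hypothesis then lives in $\ep$ and gives $\ep_1<B=\wt\ep_1$. For (3) any pair $k<j$ with $\wt\ep_k=\wt\ep_j=A$ forces $j\ne\ell+1$, hence $j\le\ell$, so the pair lies in $\ep$ and the required index $c$ (with $c<j\le\ell$) is inherited. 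Thus $\wt\ep\in\cF$.

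The argument is essentially bookkeeping, and I do not expect a genuine obstacle. The one point deserving care --- and really the content of the lemma --- is the role of the hypothesis $a\le A-1$ rather than $a\le A$: if $a=A$ were allowed, the appended entry could itself be the value $A$ that newly triggers condition (2) or (3), so the restriction $a\le A-1$ is exactly what guarantees the extension stays in $\cF$.
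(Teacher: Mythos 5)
Your proof is correct and follows essentially the same route as the paper: the truncation case is handled by observing that any violating pattern in $\ep'$ would already violate the rule for $\ep$, and the extension case by noting that the appended entry at index $\ell+1\ge 2$ is at most $A-1$, so it can only be relevant to Item (1) of Definition \ref{def:rule} and cannot trigger Items (2) or (3). Your closing remark about why $a\le A-1$ is the essential hypothesis matches the paper's reasoning exactly.
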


\begin{proof}
Notice that if $\ep'$ does not satisfy an item of Definition \ref{def:rule}, then    $\ep$ does not satisfy the item, either.
Since $\ep\in\cF$, it follows that $\ep'$ must satisfy the items of the rule.
Since  $\wt\ep_{\ell+1}\le A-1$ and $\ell+1\ge 2$, the only item of the rule that concerns 
 the $(\ell+1)$th entry is the first item, which is clearly true.
\end{proof}

\subsection{Existence}
We use induction on $n\in\nat$ to prove the existence of $\ep\in\cF$ such that 
$n=\Cdot \ep H$.
\begin{lemma} \label{lem:max-a}
Let $n>B$ be an integer, and let $\ell$ be the maximal index such that 
$H_\ell \le n$.
Then, $\ell\ge 2$ and $(A+1)H_\ell >n$.
\end{lemma}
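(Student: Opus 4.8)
The plan is to read off everything from the recurrence $H_{k+2}=K_d H_{k+1}-H_k$ established in (\ref{eq:the-recurrence}), together with the monotonicity of $H$. First I would record that $H$ is strictly increasing: since $d\ge 2$, the indices $2+d(k-1)$ form a strictly increasing sequence, and the Fibonacci terms increase along increasing indices, so $H_1<H_2<H_3<\cdots$. In particular the maximal index $\ell$ with $H_\ell\le n$ is well defined, and the hypothesis $n>B=F_{2+d}-1=H_2-1$ forces $n\ge H_2$, so that $\ell=2$ is admissible; this already yields $\ell\ge 2$.

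For the second assertion, the key point is that $\ell\ge 2$ permits applying the recurrence with the shift $k=\ell-1$, namely $H_{\ell+1}=K_d H_\ell-H_{\ell-1}$, which rearranges to $K_d H_\ell=H_{\ell+1}+H_{\ell-1}$. By the maximality of $\ell$ the next term overshoots $n$, i.e.\ $H_{\ell+1}>n$, and since $H_{\ell-1}\ge H_1=1>0$, I conclude $K_d H_\ell=H_{\ell+1}+H_{\ell-1}>n$. Finally $A+1=K_d$ by the definition $A:=K_d-1$, so $(A+1)H_\ell>n$, as desired.

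There is no substantial obstacle here; the only thing requiring care is the index bookkeeping. The recurrence (\ref{eq:the-recurrence}) requires the smaller index to be at least $1$, which is exactly why establishing $\ell\ge 2$ first is essential before invoking $H_{\ell+1}=K_d H_\ell-H_{\ell-1}$. The strict inequality in the conclusion is obtained for free from the positivity of $H_{\ell-1}$, so no sharper estimate on $H_{\ell+1}$ is needed.
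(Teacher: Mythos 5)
Your argument is correct and is essentially the paper's own proof: both derive $\ell\ge 2$ from $n\ge B+1=H_2$ and then apply the recurrence $H_{\ell+1}=(A+1)H_\ell-H_{\ell-1}$ together with the maximality bound $n<H_{\ell+1}$ and the positivity of $H_{\ell-1}$. Your rearrangement $K_dH_\ell=H_{\ell+1}+H_{\ell-1}>n$ is just a cosmetic reshuffling of the paper's chain $n<H_{\ell+1}=(A+1)H_\ell-H_{\ell-1}<(A+1)H_\ell$.
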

\begin{proof}
Since $n\ge B+1 = F_{2+d}=H_2$, we have $\ell\ge 2$.
By the maximality of $\ell$, we have 
\GGG{
n< H_{\ell+1} = (A+1)H_{\ell} - H_{\ell-1}<(A+1)H_{\ell}.
} 
\end{proof}

If $1\le n \le B$, then $n=\Cdot \ep H$ where $\ep_1=n$ and $\ep_k=0$
for all $k\ge 2$.  Since $\ep\in\cF$, this proves the existence
for $n\le B$.
Let $n\ge B+1$.
Let $\ell$ be the maximal index such that 
$H_\ell \le n $, and let $a$ be the largest positive integer $a $ such that $a H_\ell \le n$.
Then, $\ell\ge2$ and  $a\le A$ by Lemma \ref{lem:max-a}.

 Let $n':=n - aH_\ell$.
 By the induction hypothesis, $n'=\Cdot{ \ep'} H$ for some $\ep'\in\cF$.
The maximality of $a$ implies  $\ord(\ep')<\ell$.
 If $a\le A-1$, then $\ep:=(\ep'_1,\dots,\ep'_{\ell-1},a)\in \cF$ 
 by Lemma \ref{lem:cut-off} and $n=\Cdot \ep H$.
Suppose $a=A$.
    If $\ell=2$ and  $n'\ge B$, then Lemma \ref{lem:carry-over} implies $n=AH_2+n'\ge  AH_2+B=H_3$, 
    which contradicts the maximality of $\ell$.
    Thus, if $\ell=2$, then $n'<B$, $\ep:=(n',A)\in\cF$, and 
    $n=\Cdot \ep H$.
    
 Suppose $\ell\ge 3$, and let $m\ge 0$ be the maximal index  such that 
\begin{equation}\label{eq:m}
\wt n:=\sum_{k=\ell-m}^{\ell-1} (A-1) H_k + AH_\ell \le n.
\end{equation} 
Consider the case    $\ell-m\le 2$.
Then, $n-(\sum_{k=2}^{\ell-1} (A-1)H_k + AH_\ell)<B$ since
\GGG{
n-(\sum_{k=2}^{\ell-1} (A-1)H_k + AH_\ell)\ge B 
\implies
n\ge 1+\Cdot{\beta\lv \ell} H = H_{\ell+1},
}
which contradicts the maximality of $\ell$.
Thus, $n=\ep_1 H_1 +\sum_{k=2}^{\ell-1} (A-1)H_k + AH_\ell$ where $\ep_1<B$, which 
satisfies the \CGR\ for the interval $d$.

Let $\ell-m\ge 3$,
and let $b$ be the maximal coefficient such that $bH_{\ell-m-1}\le n - \wt n$.
   The maximality of $m$ implies $b \le A-2  $.
  By the induction hypothesis, $n-\wt n - bH_{\ell-m-1} = \Cdot \ep H$ for some $\ep\in\cF$.
The maximality of $b$ implies $q:=\ord(\ep)<\ell-m-1$. 
Thus, we have  the following expansion:
\GGG{
n=\sum_{k=1}^q \ep_k H_k + b H_{\ell-m-1} + \sum_{k=\ell-m}^{\ell-1} (A-1) H_k
+ A H_\ell,
}
  which satisfies the \CGR\ for the interval $d$.
This concludes the proof of the existence.

\subsection{Uniqueness}
Notice that 
\begin{equation} 
 A=K_d-1 \le B=F_{2+d}-1 \label{eq:A<B}
\end{equation}  for all $d\ge 1$, which can be proved by induction.
The equality in (\ref{eq:A<B}) holds only when $d=2$.

\begin{lemma} \label{lem:n<H}
Let $\ep\in\cF$ and $\ell:=\ord(\ep)$.
 Then, $\Cdot \ep H <H_{\ell+1}$.
\end{lemma}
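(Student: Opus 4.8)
The plan is to prove the equivalent inequality $\Cdot\ep H\le H_{\ell+1}-1$; since all quantities are integers this is the same as the asserted strict bound, and it simultaneously exhibits $\beta\lv\ell$ as the value-maximizing sequence of order $\ell$, in view of the identity $1+\Cdot{\beta\lv\ell}H=H_{\ell+1}$ from Lemma \ref{lem:carry-over}. I would argue by strong induction on $\ell=\ord(\ep)$, freely using the recurrence \eqref{eq:the-recurrence}, the monotonicity $H_{k-1}<H_k$, and the truncation Lemma \ref{lem:cut-off}. The base cases $\ell\le 2$ are direct: for $\ell=1$ one has $\Cdot\ep H=\ep_1\le B=H_2-1$, and for $\ell=2$ one checks the subcases $\ep_2\le A-1$ and $\ep_2=A$ separately, the latter forcing $\ep_1\le B-1$ by item (2) of Definition \ref{def:rule}, and both yielding $\Cdot\ep H\le H_3-1$.

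For the inductive step $\ell\ge 3$ I would split on the top entry $\ep_\ell$. If $\ep_\ell\le A-1$, then $\ep':=(\ep_1,\dots,\ep_{\ell-1})\in\cF$ by Lemma \ref{lem:cut-off} and has order smaller than $\ell$, so the induction hypothesis gives $\Cdot{\ep'}H\le H_\ell-1$; combined with $\ep_\ell H_\ell\le(A-1)H_\ell$ and the recurrence this gives $\Cdot\ep H\le AH_\ell-1<H_{\ell+1}$.

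The remaining and decisive case is $\ep_\ell=A$, where item (3) already forces $\ep_{\ell-1}\le A-1$. When $\ep_{\ell-1}\le A-2$, a direct estimate on $(\ep_1,\dots,\ep_{\ell-2})$ via the induction hypothesis together with \eqref{eq:the-recurrence} closes the case. The genuine obstacle is the boundary subcase $\ep_{\ell-1}=A-1$: here truncating the top entry is too lossy, because the top pair $(A-1,A)$ already coincides with that of $\beta\lv\ell$, so the required slack has to be extracted from the lower terms. To handle it I would pass to the \emph{bumped} sequence $\ep^\dagger:=(\ep_1,\dots,\ep_{\ell-2},A)$ of order $\ell-1$, obtained by raising $\ep_{\ell-1}$ from $A-1$ to $A$ and deleting the top position. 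The crux is to verify $\ep^\dagger\in\cF$: item (3) applied to $\ep$ shows $\ep_{\ell-2}\ne A$ (otherwise the two $A$'s at positions $\ell-2$ and $\ell$ would require a separator at position $\ell-1$, impossible since $\ep_{\ell-1}=A-1$), so $\ep^\dagger$ has no adjacent $A$'s, and any earlier $A$ in $\ep$ retains its separator at a position $\le\ell-2$; item (2) for $\ep^\dagger$ is inherited from item (2) for $\ep$, since an all-$(A-1)$ block below the new top $A$ at position $\ell-1$ corresponds to exactly the same block below the old top $A$ at position $\ell$. Once $\ep^\dagger\in\cF$ is established, the induction hypothesis gives $\Cdot{\ep^\dagger}H\le H_\ell-1$, and since $\Cdot\ep H=\Cdot{\ep^\dagger}H+AH_\ell-H_{\ell-1}$, the recurrence yields precisely $\Cdot\ep H\le(A+1)H_\ell-H_{\ell-1}-1=H_{\ell+1}-1$.

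The main obstacle is exactly this verification that the bumped sequence $\ep^\dagger$ still obeys the \CGR\ for even interval $d$; it is here that items (2) and (3) of Definition \ref{def:rule}, rather than mere termwise bounds on the entries, become indispensable. Everything else reduces to the recurrence \eqref{eq:the-recurrence} and the monotonicity of $H$.
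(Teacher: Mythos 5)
Your proof is correct, and in the decisive case it takes a genuinely different route from the paper's. Both arguments proceed by induction on $\ord(\ep)$ and dispose of the case $\ep_\ell\le A-1$ identically, by truncating the top entry via Lemma \ref{lem:cut-off}. The divergence is in the case $\ep_\ell=A$. The paper locates the \emph{maximal run} of entries equal to $A-1$ sitting immediately below the top: either the run reaches down to index $1$ or $2$, in which case Item (2) of Definition \ref{def:rule} together with $A\le B$ caps $\ep$ by $\beta\lv\ell$ and one invokes $1+\Cdot{\beta\lv\ell}H=H_{\ell+1}$, or the run is bounded below by an entry $\le A-2$, in which case the paper truncates below that entry, applies the induction hypothesis there, and again compares the remaining tail against $\Cdot{\beta\lv\ell}H$. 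You instead consume the run one layer at a time: the bump $(\ep_1,\dots,\ep_{\ell-2},A-1,A)\mapsto(\ep_1,\dots,\ep_{\ell-2},A)$ drops the order by one and changes the value by exactly $AH_\ell-H_{\ell-1}$, which the recurrence (\ref{eq:the-recurrence}) converts from the bound $H_\ell-1$ into $H_{\ell+1}-1$; if the run is longer, the same boundary case simply recurs inside the induction. The cost of your route is the verification that the bump preserves membership in $\cF$, which you carry out correctly: Item (3) guarantees that any earlier $A$ has a separator strictly below position $\ell-1$ (since $\ep_{\ell-1}=A-1$ cannot serve as one), and Item (2) transfers because the all-$(A-1)$ block below the new top is the old block shortened by one. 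The payoff is that you never need the explicit comparison with the maximal sequence $\beta\lv\ell$ nor the expansion (\ref{eq:unfolded}); the only identities used are the recurrence and Lemma \ref{lem:cut-off}. Both proofs are of comparable length, but yours packages all the combinatorial content of the rule into a single closure property of $\cF$ under the bump, which is a clean alternative.
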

\begin{proof}
We use induction on $\ell$.
If $\ell=1$, then $n\le B H_1 <B+1=H_2$.
Assume that there is $\ell \ge 1$ such that 
the statement is true for all $1\le \ell'\le \ell $.
Let $\ep\in \cF$ such that $\ord(\ep)=\ell+1$.
Suppose that  $\ep_{\ell+1}\le A-1$.
Then, by Lemma \ref{lem:cut-off}, $\ep'=(\ep_1,\dots, \ep_\ell)$ is a member of $\cF$.
By the induction hypothesis, 
  $\Cdot{ \ep'} H < H_{\ell+1}$, and hence,
  \GGG{
  \Cdot \ep H < H_{\ell+1} +\ep_{\ell+1} H_{\ell+1}
  \le A H_{\ell+1} < \Cdot {\beta \lv{\ell+1}}H <H_{\ell+2}.
  }
  
Suppose that  $\ep_{\ell+1}=A$, and let 
$m\ge 0$ be the maximal index such that $\ep_k=A-1$ for all $k$
in the interval $[\ell+1-m,\ell]$.
If $\ell+1-m\le 2$, then Item (2) of Definition \ref{def:rule} and 
(\ref{eq:A<B}) imply
$\Cdot \ep H\le \Cdot {\beta \lv{\ell+1}}H<H_{\ell+2}$.
Let $\ell+1-m\ge 3$.
Then, Item (3) of Definition \ref{def:rule} and the maximality of $m$ 
imply that $\ep_{\ell -m}\le A-2$.
Notice that $\ep':=(\ep_1,\dots, \ep_{\ell-m-1})\in\cF$ by Lemma \ref{lem:cut-off}.
By the induction hypothesis, 
$\Cdot {\ep'} H < H_{\ell-m}$, which implies
\AAA{
 \Cdot \ep H < H_{\ell-m} + \sum_{k=\ell-m}^\infty \ep_k H_k
  &\le \sum_{k=\ell-m}^\ell(A-1) H_k +A H_{\ell+1}\\
  &< \Cdot {\beta \lv{\ell+1}}H<H_{\ell+2}.
  }

\end{proof}

Let $\ep$ and $\delta$ be members of $\cF$ such that
\begin{equation} 
\Cdot \ep H = \Cdot \delta H.\label{eq:ep=delta}
\end{equation} 
Suppose that there is a largest index $q$ such that $\ep_q \ne \delta_q$, and 
without loss of generality, assume $\ep_q < \delta_q$.
Then,
\GGG{
H_q\le  \sum_{k=1}^{q-1} \delta_k H_k 
+(\delta_q - \ep_q) H_q
=\sum_{k=1}^{q-1} \ep_k H_k .
}
It follows from Lemmas \ref{lem:cut-off} and \ref{lem:n<H}
that
$
H_q\le \sum_{k=1}^{q-1} \ep_k H_k< H_q$,
which is a contradiction.
Thus,
we prove  $\ep_k=\delta_k$ for all $k\in\nat$. 

\section{Construction of coefficient sequences} \label{sec:cs}

In this section, we demonstrate the construction of the set $\cEE$ of \cs s that was introduced in 
\cite{author}.  Each positive integer is uniquely expanded by $H$ with \cs s in $\cEE$.
 Definition \ref{def:rule} is a concise description of the \cs s in $\cEE$.
In Section \ref{sec:expansions},
we shall provide a different proof of Theorem \ref{thm:main} utilizing the structure of the \cs s constructed in this section.

We begin by defining  a \lex\ order on \cs s.
\begin{deF}
\rm
Given two \cs s $\ep$ and $\tau$, we define
   $\ep < \tau$ if there is an index $\ell\ge 1$ such that 
   $\ep_\ell < \tau_\ell$ and $\ep_k=\tau_k$ for all $k>\ell$.
   This defines a total order on the set of \cs s, i.e., 
   given two \cs s $\ep$ and $\tau$, it is either 
   $\ep <\tau$, $\ep=\tau$, or $\tau <\ep$.
  
  Let $\cEE$ be a set of \cs s.
 For $\ep\in \cEE$, if there is a smallest \cs\ in $\cEE$ that is greater than $\ep$, 
then $\ep$ is called {\it the least upper bound of $\ep$ in $\cEE$}. 
It is denoted by $\lub_\cEE(\ep)$, and its $n$th iteration by $\lub_\cEE^n(\ep)$ if exists.
When $\cEE=\cE$, we simply denote it by $\lub$ instead of $\lub_\cE$.
\end{deF}

 We note here that the \cs\ $\beta\lv n$ is indeed the largest one  
 in $\cF$
  among the \cs s of order less than or equal to $n$.

\begin{lemma} \label{lem:maximal}
Let $n\in\nat$.
If $\ep\in\cF$ and $\ord(\ep)\le n$, then
  $\ep \le \beta\lv n$ (with respect to the \lex\ order).
\end{lemma}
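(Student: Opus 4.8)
The plan is to unwind the definition of the lex order and compare $\ep$ and $\beta\lv n$ entry by entry, reading from the highest index downward. Since $\ord(\ep)\le n$ and $\ord(\beta\lv n)=n$, both sequences vanish at every index $>n$, so the comparison is decided by the entries at indices $1,\dots,n$. First I would clear the easy cases: if $\ord(\ep)<n$, then $\ep_n=0<A=\beta\lv n_n$ while the two sequences agree (both zero) above $n$, so $\ep<\beta\lv n$; and if $n=1$, then $\ep=(\ep_1)$ with $\ep_1\le B=\beta\lv 1_1$ by Item (1), so $\ep\le\beta\lv 1$. This leaves the main case $\ord(\ep)=n\ge 2$.

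In the main case I would compare the top entries. If $\ep_n<A$, then already $\ep<\beta\lv n$. Otherwise $\ep_n=A=\beta\lv n_n$, the two sequences agree at index $n$, and I would let $j^*$ be the largest index at which they disagree; if there is none, then $\ep=\beta\lv n$ and we are done. By the choice of $j^*$ we have $j^*<n$ and $\ep_k=\beta\lv n_k$ for all $k>j^*$, which says explicitly that $\ep_n=A$ and $\ep_k=A-1$ for $j^*<k\le n-1$. The whole lemma then reduces to the single inequality $\ep_{j^*}<\beta\lv n_{j^*}$, which immediately yields $\ep<\beta\lv n$.

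The crux, and the only place the rule is really used, is establishing $\ep_{j^*}<\beta\lv n_{j^*}$, and I would split on whether $j^*=1$ or $j^*\ge 2$. If $j^*=1$, then $\ep$ agrees with $\beta\lv n$ at every index $\ge 2$, so $\ep_n=A$ and $\ep_k=A-1$ for all $2\le k\le n-1$; Item (2) with $m=n$ then forces $\ep_1<B$, and since $\ep_1\ne\beta\lv n_1=B-1$ we conclude $\ep_1<B-1$. If $2\le j^*<n$, then $\beta\lv n_{j^*}=A-1$ and $\ep_{j^*}\ne A-1$; Item (1) allows only $\ep_{j^*}\le A$, and the value $\ep_{j^*}=A$ is ruled out because it would put $A$ at both indices $j^*$ and $n$ with every intervening entry equal to $A-1>A-2$, contradicting Item (3). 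Hence $\ep_{j^*}\le A-2<A-1$. I expect the main obstacle to be not conceptual but organizational: keeping the index ranges aligned so that Items (2) and (3) apply verbatim — in particular checking that the $m=n$ instance of Item (2) has a vacuous middle hypothesis when $n=2$, and that the range $j^*<k\le n-1$ on which $\ep=A-1$ is exactly the range excluded by Item (3). Once this bookkeeping is pinned down, both sub-cases give $\ep_{j^*}<\beta\lv n_{j^*}$, and the lemma follows.
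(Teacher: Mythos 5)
Your proof is correct and takes essentially the same route as the paper's: compare $\ep$ with $\beta\lv n$ at the highest index of disagreement, use Item (3) to exclude the value $A$ at an interior index (possible precisely because every intervening entry equals $A-1>A-2$), and Item (2) to force $\ep_1<B$ when the disagreement sits at index $1$. If anything, your write-up is the more complete one, since the paper argues by contraposition, dismisses $n\le 2$ as trivial, and is terse about exactly why Item (3) is violated, whereas your explicit choice of the largest disagreement index $j^*$ supplies the fact that all higher interior entries are $A-1$, which is the step the paper leaves implicit.
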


\begin{proof}
If $n\le 2$, the case is trivial, and we leave it to the reader.
Suppose that $n\ge 3$ and there is $\ep\in\cF$ such that $\ord(\ep)=n$ and 
$\ep >\beta\lv n$.
Clearly, $\ep_n=A$ must be the case.
If  there is $2\le k\le n-1$ such that $ \ep_k>A-1$,
then $\ep_k=A$ must be the case, which violates Item (3) of the \CGR\ for
even interval $d$.
Thus,
$\ep_k=A-1$  is true for all $2\le k\le n-1$.
Then, it follows $\ep_1>B-1$, which violates Item (2) of the rule.
Therefore,   no \cs\  of order $n$ in $\cF$   is greater 
than $\beta\lv n$.
\end{proof}

The main idea of the construction of $\cEE$ is declaring \cs s $\ep<\beta\lv \ell$ given $\ell\in\nat$,
so that $\lub_\cEE^n(\ep)$ gradually build up to $\beta\lv \ell$ carrying over entries as in (\ref{eq:beta-n}).
These smaller \cs s are constructed with \cs s called {\it proper blocks}.
\begin{deF}
\rm
\label{def:proper}
Coefficient sequences $\zeta$  and $\zeta'$ are called a {\it upper proper block} and a {\it lower proper block}, respectively,
 if 
 \begin{align*}
\zeta &=(\zeta_1,\beta\lv n_3,\beta\lv n_4,\dots, \beta\lv n_n),\\
\zeta' &=(\zeta_1',\beta\lv m_2,\beta\lv m_3,\dots, \beta\lv m_m),
\end{align*}  
and $\zeta_1<\beta\lv n_2$ for some $n\ge 2$ and $\zeta_1'<\beta\lv m_1$ for some $m\ge 1$.

\end{deF}
Notice that $A=K_d-1 < B=F_{2+d}-1$  for all $d\ge 1$.
The upper proper blocks of order $1$, which are obtained with $n=2$, are $(\zeta_1)$ where $0\le \zeta_1\le A-1$,
and the lower proper blocks of order $1$, which are obtained with $m=1$, are $(\zeta_1')$ where $0\le \zeta_1'\le B-2$.
Thus, the upper proper blocks of order $1$ can be considered as lower proper blocks of order $1$ as well.
In particular, $(0)$ is an upper and lower proper block.
Listed below are the proper blocks of order up to $4$:
\begin{align*} 
\text{Upper proper blocks: }&
(c,A),\ (c, A-1, A),\ (c,A-1,A-1,A),\quad c\le A-2,\\
\text{Lower proper blocks: }&
(c',A),\ (c',A-1,A),\ (c',A-1,A-1,A),\quad c'\le B-2. 
\end{align*}
In general, the lower proper blocks are similar to the upper proper blocks, but 
the first entries of the lower proper blocks can go up to a value higher than
those of the upper proper blocks. 
This implies that all upper proper blocks can be considered as lower proper blocks.

\begin{deF}
\rm
Let $\cFo$ denote the  collection of \cs s obtained by concatenating a single lower proper block with multiple 
upper 
proper blocks to the right side.
We use the symbol $\vee$ for the concatenation operator, i.e.,
if $\ord(\ep)=\ell$, then
$$\ep\vee \tau :=(\ep_1,\dots,\ep_\ell,\tau_{1 },\tau_{2 },\dots).$$
Then, all \cs s $\ep\in\cFo$ are in the form of 
$$\zeta\lv 0 \vee \zeta\lv 1\vee \cdots \vee \zeta\lv m$$
where $\zeta\lv 0$ is a lower proper block and $\zeta\lv k$ for $1\le k\le m$ are 
upper proper blocks.
\end{deF}

Listed below are some examples where $d=4$, for which $A=6$ and $B=7$:
\begin{gather} 
(5,5,6)\vee(0,5,6) \vee(0)\vee(2)\vee (5)=(5,5,6,0,5,6,0,2,5)\in\cFo,\notag\\
(6)\vee(4,5,6)\vee(5,5,6)  = (6,4,5,6,5,5,6)\not\in\cFo.\notag
\end{gather} 
Notice that $(5,5,6)$ and $(6)$  are lower proper blocks that are not  upper proper blocks. 
Since $(5,5,6)$ is not an upper proper block, the second \cs\ is not a member of $\cFo$.

The \cs s   in $\cFo$ are constructed  to allow for an increase in the first entry while remaining a \cs\ that is less than or equal to some $\beta\lv \ell$.  For example, if $d=4$, we have
\begin{gather} 
\ep=(0,5,6)\vee(0,5,6)\overset{\lub^5}{\longrightarrow}
(5,5,6)\vee(0,5,6)\in\cFo\HSW{.3}\notag\\
\HSW{.2}
\overset{\lub }{\longrightarrow}(6,5,6)\vee(0,5,6)\not\in\cFo,\quad
 \beta\lv 3=(6,5,6).  \label{eq:656}
\end{gather}
By allowing $\beta\lv n$ as a first block, we complete the construction.
We refer to an upper or lower proper block or $\beta\lv n$ simply as a {\it block}.

\begin{deF}
\rm
Let $\cEE$ be the collection  of the 
\cs s that are in the form of either $\beta\lv n\vee \mu$  or $\tau$ 
where $n\ge 1$, $\mu$ is the concatenation of upper proper blocks,  and $\tau\in\cFo$.
\end{deF}

For both cases  $\ep= \beta\lv n\vee \mu$  and $\ep=\tau$ considered above,
if  $\ord(\ep)=\ell$, then
  a block $(\ep_{\ell-s},\ep_{\ell-s+1},\dots,\ep_\ell)$ is uniquely determined.
  By induction, it follows that 
  the decomposition of $\mu$ and $\tau$ into   proper blocks is uniquely determined.
  We call the following {\it the decomposition of $\ep$ into   blocks}:
  $$ \ep=\beta\lv n\vee \zeta\lv 1 \vee \cdots \vee \zeta\lv m,
  \ \text{ or } \ep=  \zeta\lv 1 \vee \cdots \vee \zeta\lv m $$
  where $\zeta\lv t$ are (upper or lower) proper blocks. 
  
\begin{prop}
For each even integer $d\ge 2$, we have $\cF = \cEE$.
\end{prop}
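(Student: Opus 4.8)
The plan is to establish the equality by proving the two inclusions $\cEE\subseteq\cF$ and $\cF\subseteq\cEE$ directly from the definitions. The engine in both directions is the shape of a \emph{multi-entry} block (a proper block of order $\ge 2$, or $\beta\lv n$ with $n\ge 2$): read from the lowest to the highest index it consists of a leading entry, then a run of copies of $A-1$, then a single $A$ as its top entry. Consequently the value $A$ occurs inside a block only as its highest entry, and by Definition \ref{def:proper} the leading entry of an upper proper block is at most $A-2$, while the leading entry of a lower block or of $\beta\lv n$ may be larger.

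For $\cEE\subseteq\cF$, I would take $\ep$ presented as $\beta\lv n\vee\mu$ or as an element of $\cFo$ and verify the three items of Definition \ref{def:rule}. Item (1) is immediate from the entrywise bounds built into the blocks. For Item (3), suppose $A$ occurs at two indices $k<j$ with $k\ge 2$; since $A$ appears only at the top of a multi-entry block and $k$ is the top of its own block, the occurrence at $j$ lies in a later block, which is therefore not the first block and hence is a multi-entry upper block. That block begins at an index strictly between $k$ and $j$, where its leading entry is at most $A-2$, furnishing the witness required by Item (3). For Item (2), the only member of $\cEE$ with $\ep_1=B$ is $\beta\lv 1\vee\mu=(B)\vee\mu$; there the first $A$ at an index $\ge 2$ is again the top of a multi-entry upper block whose leading entry $\le A-2\ne A-1$ interrupts any run of copies of $A-1$ starting at index $2$, so the hypothesis of Item (2) cannot occur and the item holds vacuously. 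When $\ep_1<B$ the conclusion of Item (2) is automatic.

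For $\cF\subseteq\cEE$, I would decompose a given $\ep\in\cF$ by peeling blocks off the top. Writing $\ell=\ord(\ep)$: if $\ep_\ell\le A-1$, peel the order-$1$ block $(\ep_\ell)$; if $\ep_\ell=A$, scan downward through the maximal run of copies of $A-1$ to the first index $i^{\ast}$ with $\ep_{i^{\ast}}\ne A-1$, and peel $(\ep_{i^{\ast}},A-1,\dots,A-1,A)$. By Lemma \ref{lem:cut-off} each truncated prefix again lies in $\cF$, so the procedure recurses and terminates at index $1$. The substance is that every peeled block is legal: when $i^{\ast}\ge 2$, Item (3) of Definition \ref{def:rule} forbids $\ep_{i^{\ast}}=A$ (this would place two $A$'s with only copies of $A-1$ between them), so $\ep_{i^{\ast}}\le A-2$ and the block is an upper proper block; when the peel reaches index $1$, Item (2) rules out the leading value $B$ in the run-shape, so $\ep_1\le B-1$ and the first block is $\beta\lv n$ (if $\ep_1=B-1$), a lower proper block (if $\ep_1\le B-2$), or an order-$1$ block. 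Hence every non-first block is upper and the first block is $\beta\lv n$ or lower, so $\ep$ has the form $\beta\lv n\vee\mu$ or lies in $\cFo$; only the existence of such a decomposition is needed, not its uniqueness.

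I expect the main obstacle to be the inclusion $\cF\subseteq\cEE$, and within it the analysis at index $1$: correctly sorting the first block among $\beta\lv n$, a lower proper block, and the illegal leading value $B$, which is exactly where Item (2) is forced into play, whereas the interior peels are governed cleanly by Item (3). A secondary point requiring care is that every upper proper block of order $1$ is also a lower proper block of order $1$ (because $A-1\le B-1$), so when the first block happens to carry a small leading entry it may legitimately be read as the block $\zeta\lv 0$ of an element of $\cFo$; this is what keeps the two advertised forms of $\cEE$ exhaustive.
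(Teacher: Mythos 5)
Your proof is correct and follows essentially the same route as the paper: for $\cEE\subseteq\cF$ you verify the three items of Definition \ref{def:rule} from the block structure (with a somewhat more explicit treatment of Item (2) via the observation that only $(B)\vee\mu$ has first entry $B$), and for $\cF\subseteq\cEE$ you peel blocks off the top exactly as in the paper's induction on $\ord(\ep)$, using Item (3) to force the leading entry of an interior peeled block down to $A-2$ and Item (2) to sort the first block between $\beta\lv n$ and a lower proper block.
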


\begin{proof}
Let $\ep\in \cEE$.  Then, it has  a decomposition of $\ep$ into   blocks.
The maximal \cs s and proper blocks satisfy Item (1) of the \CGR\ for  $d$, and  so does $\ep$.
Item (2) refers to the case that the decomposition has a lower proper block of order greater than $1$, 
and hence, the item is satisfied.
Notice that the entries of the upper proper blocks are less than or equal to $A$, and 
if $\ep\in\cEE$ and $\ep_m = A$ for $m\ge 2$, then the decomposition of $\ep$ implies that 
$(\ep_{m-j},\dots, \ep_m)$ for some $j>0$ is a block.
Thus,
Item (3) refers to the case that there are two or more  blocks $\zeta$ in the decomposition 
such that
$\zeta_\ell =A$ where $\ell=\ord(\zeta)\ge 2$.
Thus, the  block appearing to the right side of the other one must be an upper proper block $\xi$.
By the definition of upper proper blocks, $\xi_1< A-1$, and hence, Item (3) is satisfied.
This proves $\ep\in\cF$.

For the inclusion $\cF \subset \cEE$, we use  induction on $\ord(\ep)$.
Let $\ep\in\cF$, and let $n:=\ord(\ep)$.   
The  cases of   $n\in\set{1,2}$ are a straightforward exercise to verify that they have a decomposition into   blocks,
and we leave it to the reader.
Suppose that  $\ep\in\cF$ and $\ord(\ep)=n\ge 3$.
If $\ep_n<A$, then $(\ep_1,\dots,\ep_{n-1})$  satisfies the \CGR\ for $d$, and by 
the induction hypothesis, $\ep$ has a decomposition into   blocks since $(\ep_n)$ is an upper 
proper block.
If $\ep_n=A$ and $\ep_{n-1}\le A-2$, then 
   $(\ep_1,\dots,\ep_{n-2})$ satisfies the \CGR\ for $d$.
   Since $(\ep_{n-1},\ep_n)$ is an upper proper block, this case is proved  by the induction hypothesis.
Suppose that $\ep_n=A$ and there is a smallest index $\ell$ such that  $\ep_k= A-1$ for all $\ell \le k\le n-1$.
If $\ell=1$, then $A-1<B-1$ implies that $\ep$ is a lower proper block.
If $\ell=2$, then Item (2) of the \CGR\ for $d$ implies $\ep_1<B$, and hence, $\ep$ is a lower proper block or 
$\beta\lv n$.
If $\ell\ge 3$, then Item (3) of the \CGR\ for $d$ implies that $\ep_{\ell-1}\ne A$, and hence, $\ep_{\ell-1}<A-1$ must be the case.
Then, $(\ep_1,\dots,\ep_{\ell-2})$ still satisfies the \CGR\ for $d$.
Since $(\ep_{\ell-1},\dots, \ep_n)$ is an upper proper
block, by the induction hypothesis,  $\ep\in\cEE$.
\end{proof}

\section{Expansions of least upper bounds} \label{sec:expansions}

  Notice that each entry of $\ep\in\cF$ is bounded by $B$.
   This implies that  for each $\ep\in\cF$, there are only finitely many $\tau\in\cF$ such that 
   $\tau<\ep$.
   Thus, under this order, $\cF$ satisfies the well-ordering principle.
It follows that $\lub(\ep)$ exists  for each $\ep\in \cF$.
Thus, $\cF = \set{ \lub^n(0) :  n\in \nat_0}$.
Define the function $\eval : \cF \to \nat_0$ given by 
$\ep\mapsto \Cdot \ep H$. 
In Theorem \ref{thm:increasing} below, we prove that this function is bijective.

Let us demonstrate how to find $\lub(\ep)$.
Notice that if $\tau= \zeta\lv 1 \vee \cdots \vee \zeta\lv m \in\cFo$ and $\ell=\ord(\zeta\lv 1)$, then 
$(1+\zeta\lv 1_1,\zeta\lv 1_2,\dots,\zeta\lv 1_\ell)$ remains as a block.
Thus, 
\begin{equation}
 \lub(\tau) = (1+\tau_1,\tau_2,\dots).\label{eq:carried-over-0}
\end{equation} 
 If
\begin{equation} \label{eq:carry-over}
 \ep = \beta\lv n\vee \zeta\lv 1 \vee \cdots \vee \zeta\lv m\in\cF 
\end{equation} 
where $\zeta\lv t$ are upper proper blocks, then we claim
\begin{equation}
\label{eq:carried-over} 
\lub(\ep) = (a_1,\dots, a_n, 1+\ep_{n+1},\ep_{n+2},\ep_{n+3},\dots) 
\end{equation}
where $a_k=0$ for all $k=1,\dots,n$.
For example, if $\ep$ is the \cs\ considered in (\ref{eq:656}) where $A=6$ and $B=7$,
then $\beta\lv 3=(6,5,6)$ implies
$$\ep=(\mathbf{6,5,6},0,5,6)\ \implies\lub(\ep) = (0,0,0,1,5,6).$$
We use this example to explain  (\ref{eq:carried-over}).
First of all, $(0,0,0,1,5,6)$ is an upper bound of $\ep$.
Suppose that $ \tau:=(a,b,c, 0,5,6)\in \cF$ and  $\ep<\tau$.
Then, comparing the   block decompositions of $\ep$ and $\tau$, we conclude that $(a,b,c)\in \cF$ and 
$(a,b,c)>\beta\lv 3$.  However, by Lemma \ref{lem:maximal}, $\beta\lv 3$ is the largest one with order $3$ in $\cF$, which
 proves that $\lub(\ep)$ is as described above.
Notice also that it is crucial that $\zeta\lv 1$ in (\ref{eq:carry-over}) is an upper proper block, 
so that $\lub(\ep)$ described in (\ref{eq:carried-over}) remains in $\cF$.

Let us prove Theorem \ref{thm:main}.
First, we relate  the expansion $\Cdot \ep H$ to the descriptions  of $\lub(\ep)$ given in
(\ref{eq:carried-over-0}) and (\ref{eq:carried-over}).
If $ \tau\in\cFo$, then  (\ref{eq:carried-over-0}) implies 
\begin{equation}\label{eq:tau}
1 + \Cdot \tau H =\Cdot{\lub(\tau)} H.
\end{equation}

We use Lemma \ref{lem:carry-over}   for the case $\ep=\beta\lv n \vee \tau$ where 
$\tau\in\cFo$.   
Suppose that $\ep$ is the \cs\ considered in (\ref{eq:carry-over}). 
Then, (\ref{eq:carried-over}) and  Lemma \ref{lem:carry-over} imply 
\begin{align}  \label{eq:ep}
1 + \Cdot \ep H = (1+\ep_{n+1}) H_{n+1} + \sum_{k=n+2}^\infty \ep_k H_k
=\Cdot{\lub(\ep)} H.
\end{align} 

Our main result Theorem \ref{thm:main}  is equivalent to the bijectivity of 
$\eval : \cF \to \nat_0$ given by $\ep \mapsto \Cdot \ep H$, which is the corollary of the following theorem.
\begin{theorem} \label{thm:increasing}
For $n\in\nat_0$, we have $\eval(\lub^n(0)) = n$.
\end{theorem}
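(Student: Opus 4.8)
The plan is to run a short induction on $n$, driven by the single increment identity $\eval(\lub(\ep)) = \eval(\ep) + 1$ valid for every $\ep \in \cF$, which has essentially been assembled already in (\ref{eq:tau}) and (\ref{eq:ep}). For the base case $n = 0$, the iterate $\lub^0(0)$ is the zero sequence, so $\eval(\lub^0(0)) = \Cdot 0 H = 0$, as required.

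For the inductive step I would assume $\eval(\lub^n(0)) = n$ and set $\ep := \lub^n(0)$, which lies in $\cF$ because $0 \in \cF$ and $\lub$ maps $\cF$ into itself. Since $\cF = \cEE$, the sequence $\ep$ has one of exactly two shapes. If $\ep = \tau \in \cFo$, then (\ref{eq:tau}) gives $\eval(\lub(\ep)) = 1 + \eval(\ep)$. Otherwise $\ep = \beta\lv m \vee \mu$ with $\mu$ a concatenation of upper proper blocks; since an upper proper block is in particular a lower proper block, such a $\mu$ is itself a member of $\cFo$, so (\ref{eq:ep}) applies with $\tau = \mu$ and again yields $\eval(\lub(\ep)) = 1 + \eval(\ep)$. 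Combining either case with the inductive hypothesis gives $\eval(\lub^{n+1}(0)) = 1 + n$, closing the induction.

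The one point deserving care is the degenerate case $\mu = \emptyset$, i.e.\ $\ep = \beta\lv m$. Here the carry-over description (\ref{eq:carried-over}) reduces to $\lub(\beta\lv m) = (0,\dots,0,1)$ with the single $1$ in position $m+1$, so that $\eval(\lub(\ep)) = H_{m+1} = 1 + \Cdot{\beta\lv m}H$ by the defining identity (\ref{eq:beta-n}) of the maximal \cs; thus the increment identity survives this boundary case as well. I expect the genuine difficulty to lie not in this theorem but upstream: the entire argument rests on the carry-over formulas (\ref{eq:carried-over-0}) and (\ref{eq:carried-over}) together with the block decomposition guaranteeing $\cF = \cEE$, so that every $\ep \in \cF$ is captured by exactly one of (\ref{eq:tau}) and (\ref{eq:ep}). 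Once those are in hand the theorem is a two-line induction, and its intended corollary---the bijectivity of $\eval$, hence Theorem \ref{thm:main}---then follows at once, since $\cF = \set{\lub^n(0) : n \in \nat_0}$ forces $n \mapsto \lub^n(0)$ to be a bijection onto $\cF$ whose inverse is precisely $\eval$.
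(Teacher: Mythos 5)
Your proof is correct and follows essentially the same route as the paper: the single-increment identity $1+\eval(\ep)=\eval(\lub(\ep))$, obtained from (\ref{eq:tau}) and (\ref{eq:ep}), combined with a trivial induction starting from $\eval(0)=0$. Your extra attention to the boundary case $\ep=\beta\lv m$ (empty $\mu$) is a sensible elaboration that the paper leaves implicit, but it does not change the argument.
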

\begin{proof}
By (\ref{eq:tau}) and (\ref{eq:ep}),
we have   
$$1 + \Cdot \ep H = \Cdot {\lub(\ep)} H$$ for all $\ep\in\cF$, i.e, 
$
 1 + \eval(\ep) = \eval(\lub(\ep)) \notag
$.
By induction, this proves that 
$\eval(\lub^n(0)) = n$ for all $n\in\nat_0$.  
\end{proof}

\section*{Acknowledgement}
The author would like to thank the referee for carefully reading the paper and   recommending to consider a direct proof of existence and uniqueness.

\end{document}